\newcommand{\bea}{\begin{eqnarray}}
	\newcommand{\eea}{\end{eqnarray}}
\newcommand{\bna}{\begin{eqnarray*}}
	\newcommand{\ena}{\end{eqnarray*}}
\numberwithin{equation}{section}
\theoremstyle{plain}
\newtheorem*{Theorem A}{Theorem A}
\newtheorem*{Theorem B}{Theorem B}
\newtheorem{problem}{Problem}[section]
\newtheorem{lemma}{Lemma}[section]
\newtheorem{theorem}{Theorem}[section]
\theoremstyle{definition}
\newtheorem{definition}[lemma]{Definition}
\begin{document}
	
	\title
	[{On a problem of Pongsriiam on the sum of divisors, II}] {On a problem of Pongsriiam on the sum of divisors, II}
	
	\author
	[R.-J. Wang] {Rui-Jing Wang}
	
	\address{(Rui-Jing Wang) School of Mathematical Sciences,  Jiangsu Second Normal University, Nanjing 210013, People's Republic of China}
	\email{\tt wangruijing271@163.com}
	
	
	\subjclass[2010]{11A25, 11N25}
	
	\keywords{arithmetic functions, monotonicity,
		sum of divisors function.}

	\begin{abstract}
		For any positive integer $n$, let $\sigma (n)$ be the sum of all positive divisors of $n.$ In this paper, it is proved that the set of positive integers $ n $ for
		which $ \sigma(30n+1)\geq \sigma(30n) $ has a density less than $ 0.0371813, $ which answers a recent problem of Pongsriiam in part.
	\end{abstract}
	
	\maketitle
	\section{Introduction}
	For any positive integer $n$, let $\sigma (n)$ and $ \phi(n) $ be the sum of divisors function and the Euler totient function, respectively.  In this paper, we always assume that $ x $ denotes a real number, $ a,b,k,l,m,n $ are positive integers and $ p $ is a prime. In 1973, Jarden\cite[p.65]{Jarden} observed that $ \phi(30n+1)>\phi(30n) $ for all $ n\leq 10^{5}. $ Though the inequality was calculated to be true up to $ 10^9, $ Newman\cite{Newman} proved that there are infinitely many $ n $ such that $ \phi(30n+1)<\phi(30n) $ and Martin\cite{Martin} gave the smallest one.  The result of Newman\cite{Newman} gave another example of the Chebyshev's bias\cite{Chebyshev}. For more relavant research, one may refer to \cite{Deleglise,Erdos1936,Luca,Pollack,Wang}. Motivated by these results, it is natural to consider the parallel problem on the sum of divisors function. Recently, Pongsriiam\cite[Theorem 2.4]{Pongsriiam} 
	found that 
	$\sigma(30n)>\sigma(30n+1)$ for all $ n\leq 10^7 $ and proved that $\sigma(30n)-\sigma(30n+1) $ has infinitely many sign changes. Pongsriiam\cite{Pongsriiam} further considered quantitative versions of his result and posed the following two relevant problems.
	\begin{problem}\label{p1}(\cite[Problem 3.8(i)]{Pongsriiam}~)~
		Is it true that $$\sum_{n\leq K\atop \sigma(30n+1)<\sigma(30n)}1>\sum_{n\leq K\atop \sigma(30n+1)>\sigma(30n)}1$$
		for all $ K\in \mathbb{N}? $
	\end{problem}
	\begin{problem}\label{p2}(\cite[Problem 3.8(ii)]{Pongsriiam}~)~
		Is it true that $$\sum_{n\leq K}\sigma(30n)>\sum_{n\leq K}\sigma(30n+1)$$
		for all $ K\in \mathbb{N}? $
	\end{problem}
	Ding, Pan and Sun\cite{Ding} studied several problems of Pongsriiam. Inspired by their ideas, the author\cite{Wang2024} answered affirmatively the above Problem \ref{p2}. For any set of positive integers $ A, $ let $ A(x)=|\{n\leq x: n\in A\}|. $ We use $ \mathbf{d}A $ to denote the density of set $ A. $ Namely, $$ \mathbf{d}A=\lim_{x\rightarrow\infty}\frac{A(x)}{x}. $$ Kobayashi and Trudgian\cite{Kobayashi2020} proved that the set of positive integers $ n $ for
	which $ \sigma(2n+1)\geq \sigma(2n) $ has a density between $ 0.0539171 $ and $ 0.0549445. $ Following their method, we solve Problem \ref{p1} in part.
	\begin{theorem} \label{thm1} Problem \ref{p1} is true for all sufficiently large $ K. $
	\end{theorem}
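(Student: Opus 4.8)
The plan is to derive Problem~\ref{p1} (for all large $K$) from the density estimate advertised in the abstract, namely that the set $A := \{n \in \mathbb{N} : \sigma(30n+1) \ge \sigma(30n)\}$ has upper density $\overline{\mathbf d}A < 0.0371813$; the reduction is a one-line counting argument and the real work is the density bound. Granting it, put $P(K) = \#\{n \le K : \sigma(30n+1) < \sigma(30n)\}$ and $Q(K) = \#\{n \le K : \sigma(30n+1) > \sigma(30n)\}$. Every $n \le K$ lies in exactly one of three cases, so $P(K) = K - A(K)$, while $Q(K) \le A(K)$; hence $P(K) - Q(K) \ge K - 2A(K) > 0$ as soon as $A(K) < K/2$, which holds for all large $K$ because $\overline{\mathbf d}A < 1/2$. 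This proves Problem~\ref{p1} for such $K$, so everything reduces to the bound on $\overline{\mathbf d}A$, which I would obtain following Kobayashi and Trudgian.

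The first move is to replace the comparison of $\sigma$-values by a comparison of abundancies. Given $n$, write $a = v_2(n),\ b = v_3(n),\ c = v_5(n)$, so $n = 2^a 3^b 5^c m$ with $(m,30)=1$ and $30n = 2^{a+1}3^{b+1}5^{c+1}m$; since $\sigma$ is multiplicative and $\sigma(m) \ge m$,
\[
\frac{\sigma(30n)}{30n} \;\ge\; \frac{\sigma(2^{a+1})}{2^{a+1}}\cdot\frac{\sigma(3^{b+1})}{3^{b+1}}\cdot\frac{\sigma(5^{c+1})}{5^{c+1}} \;=:\; T_{a,b,c},
\]
with $T_{0,0,0} = 72/30 = 2.4$, each $T_{a,b,c}$ increasing in $a,b,c$, and $\sup T_{a,b,c} = 15/4$. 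If $n \in A$, then $\sigma(30n+1) \ge \sigma(30n) \ge T_{a,b,c}\cdot 30n$, so $\sigma(30n+1)/(30n+1) \ge T_{a,b,c} - T_{a,b,c}/(30n+1)$; hence for each fixed $\epsilon>0$ all but finitely many $n\in A$ with $(v_2,v_3,v_5)(n)=(a,b,c)$ satisfy $\sigma(30n+1)/(30n+1) > T_{a,b,c}-\epsilon$. Truncating the valuations at a large $N$ (the exceptional set $\{v_2(n)>N\}\cup\{v_3(n)>N\}\cup\{v_5(n)>N\}$ has density $O(2^{-N})$) and then letting $N\to\infty$ gives
\[
\overline{\mathbf d}A \;\le\; \sum_{a,b,c\ge 0}\frac{1}{2^{a+1}}\cdot\frac{2}{3^{b+1}}\cdot\frac{4}{5^{c+1}}\;D(T_{a,b,c}-\epsilon),
\]
where $D(t)$ is an upper bound --- uniform over moduli $q$ supported on $\{2,3,5\}$ and over residues $r$ --- for the density of $\{k\equiv r \pmod q : \sigma(k)/k\ge t\}$ relative to that progression. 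Uniformity is what matters, and it holds because $\sigma(k)/k$ for $k$ coprime to $30$ depends only on the primes $\ge 7$, which are asymptotically equidistributed across residue classes to any modulus built from $2,3,5$.

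It remains to produce an explicit non-increasing $D(t)$ that is rigorous and numerically sharp. I would use $\sigma(k)/k = \prod_{p^e\|k}(1+p^{-1}+\cdots+p^{-e})$, fix a cutoff $y$, evaluate the contribution of the primes $7\le p<y$ exactly (the events ``$p^e\|k$'' have explicit densities, jointly determined by the Chinese Remainder Theorem), and dominate the primes $p\ge y$ by Rankin's trick $\mathbbm{1}[\sigma(k)/k\ge t]\le t^{-\delta}(\sigma(k)/k)^{\delta}$ for a free $\delta>0$, together with the classical mean value $\sum_{k\le x}(\sigma(k)/k)^{\delta}\sim c_\delta\,x$ with $c_\delta = \prod_p(1-p^{-1})\sum_{e\ge 0}p^{-e}(1+\cdots+p^{-e})^{\delta}$. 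Optimizing $y$ and $\delta$ --- and, if more room is needed, conditioning on one or two additional small primes dividing $n$, which only raises the thresholds $T_{a,b,c}$ (and incidentally removes those primes from $30n+1$) --- one bounds each $D(T_{a,b,c}-\epsilon)$. As $T_{a,b,c}\ge 2.4$ throughout and the weights decay geometrically, the triple series converges fast; a finite evaluation of its terms plus a crude tail bound then yields $\overline{\mathbf d}A < 0.0371813 < 1/2$.

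The main obstacle is exactly this last step. The independence heuristic for the events ``$p^e\|k$'' is only asymptotic, so it must be replaced by an honest inequality --- the exact small-prime factor times the Rankin bound for the tail --- that loses as little as possible, followed by a numerically faithful optimization; the target $0.0372$ is tight, and already the leading term $\frac{4}{15}\,D(2.4-\epsilon)$ forces $D(2.4)$ to be controlled well below $0.14$, so the Euler product must be truncated fairly far and the conditioning chosen carefully. The remaining technicalities --- the limit $\epsilon\to 0$ (handled by monotonicity of $D$ and its continuity off a countable set), the tail of the triple sum, and arranging the truncation in $(a,b,c)$ so as never to invoke countable subadditivity of the upper density --- are routine by comparison.
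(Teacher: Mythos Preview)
Your reduction of Theorem~\ref{thm1} to an upper-density bound $\overline{\mathbf d}A<1/2$ is correct and is exactly how the paper deduces Theorem~\ref{thm1} from Theorem~\ref{thm2}. Note, though, that for Theorem~\ref{thm1} you only need $<1/2$, not $0.0371813$, and your own outline already delivers this with almost no work: taking $\delta=1$ in the Rankin step and no further conditioning, the mean of $\sigma_{-1}(k)$ over $k\equiv 1\pmod{30}$ is $\zeta(2)\prod_{p\le 5}(1-p^{-2})\approx 1.053$, so $D(2.4)\le 1.053/2.4<0.44$; since your weights sum to $1$ and every $T_{a,b,c}\ge 2.4$, this gives $\overline{\mathbf d}A<0.44$. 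The ``main obstacle'' you flag is thus an obstacle only for Theorem~\ref{thm2}, not for the statement at hand.

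Where you do diverge from the paper is in the route to the sharp constant. You condition on the $\{2,3,5\}$-part of $30n$ in an outer sum and then, inside $D(t)$, on small primes of $30n+1$; the paper instead conditions \emph{jointly and symmetrically} on the $y$-smooth parts of both $30n+1$ and $30n$ via $S(a,b)=\{n:Y(30n+1)=a,\ Y(30n)=b\}$ with $y=157$, and for each pair uses a first-moment inequality (its Lemma~\ref{lem2}) to get $\mathbf dC(a,b)\le(\Lambda_P(1)-1)\,\sigma_{-1}(a)\,\mathbf dS(a,b)/(\sigma_{-1}(b)-\sigma_{-1}(a))$, summing over $ab\le 10^9$. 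Your scheme carries far less information on the $30n$ side (only the primes $2,3,5$, plus ``one or two'' more), so it is unlikely to reach $0.0371813$ as stated; the paper's two-sided conditioning is what buys the tight constant. The paper also separately proves that the density $\mathbf dB$ \emph{exists} (via Shapiro's distribution-function machinery for $\sigma_{-1}$ and a proof that $\mathbf d(B\setminus C)=0$), whereas you work only with upper density---adequate for Theorem~\ref{thm1}, but not for Theorem~\ref{thm2}.
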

	In fact, we prove a slightly
	stronger result.
	\begin{theorem} \label{thm2} Let $ B=\{n\geq 1: \sigma(30n+1)\geq \sigma(30n)\}. $ Then $ B $ has a density and $$\mathbf{d}B<0.0371813.$$ 
	\end{theorem}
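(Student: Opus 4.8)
The plan is to replace the condition $\sigma(30n+1)\ge\sigma(30n)$ by one involving only the contribution to $\sigma$ from primes $\ge 7$. Write $h(m)=\sigma(m)/m$ and, for a positive integer $m$, put $H(m)=\prod_{p\ge 7}\sigma(p^{v_p(m)})/p^{v_p(m)}$, where $v_p(m)$ is the exponent of $p$ in $m$; thus $h(m)=\tfrac{\sigma(2^{v_2(m)})}{2^{v_2(m)}}\tfrac{\sigma(3^{v_3(m)})}{3^{v_3(m)}}\tfrac{\sigma(5^{v_5(m)})}{5^{v_5(m)}}H(m)$, and $H(m)=h(m)$ whenever $\gcd(m,30)=1$. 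Since $\sigma(p^{e})/p^{e}$ increases with $e$, the $2$-, $3$- and $5$-parts of $h(30n)$ are at least $\tfrac32,\tfrac43,\tfrac65$, so $h(30n)\ge\tfrac{12}{5}$; hence for every $n\in B$,
\[
H(30n+1)=\frac{\sigma(30n+1)}{30n+1}\ge\frac{\sigma(30n)}{30n+1}=\frac{h(30n)\cdot 30n}{30n+1}\ge\frac{12}{5}\cdot\frac{30n}{30n+1},
\]
and the right-hand side increases to $\tfrac{12}{5}$. So for each $\varepsilon>0$ all but finitely many $n\in B$ satisfy $H(30n+1)\ge\tfrac{12}{5}-\varepsilon$. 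Next, truncating the product defining $H$, the value of $H(m)$ is, up to an arbitrarily small error, a function of $m$ modulo a product of prime powers all of whose primes are $\ge 7$; such a modulus is coprime to $30$, so by the Chinese Remainder Theorem $H(m)$ and $m\bmod 30$ are asymptotically independent in density, and a routine limiting argument shows that $\{n\ge 1:H(30n+1)\ge c\}$ has a density equal to $\mathbf{d}\{m\ge 1:H(m)\ge c\}$ for every $c$ (this density exists, e.g.\ by the Erd\H{o}s--Wintner theorem). Letting $\varepsilon\to0$ and using that the survival function of $H$ is left-continuous, we get
\[
\overline{\mathbf{d}}B\le\mathbf{d}\bigl\{m\ge 1:H(m)\ge\tfrac{12}{5}\bigr\}.
\]

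The substance of Theorem~\ref{thm2} is then an explicit upper bound for $\mathbf{d}\{m:H(m)\ge\tfrac{12}{5}\}$, which I would obtain by the truncation scheme of Del\'eglise~\cite{Deleglise} and Kobayashi--Trudgian~\cite{Kobayashi2020}. Fix a large $y$ and factor $H=H_y\cdot H^y$ with $H_y=\prod_{7\le p\le y}\sigma(p^{v_p})/p^{v_p}$ and $H^y=\prod_{p>y}\sigma(p^{v_p})/p^{v_p}\ge 1$. For any $T>1$ one has $\{H\ge\tfrac{12}{5}\}\subseteq\{H_y\ge\tfrac{12}{5T}\}\cup\{H^y>T\}$, and Markov's inequality applied to the nonnegative function $\log H^y$ gives
\[
\mathbf{d}\{m:H^y(m)>T\}\le\frac{1}{\log T}\sum_{p>y}\sum_{e\ge 1}\Bigl(1-\frac1p\Bigr)\frac{1}{p^{e}}\log\frac{\sigma(p^{e})}{p^{e}},
\]
a tail of order $1/(y\log y)$ that is made explicitly tiny. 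The variable $H_y$ is a product of finitely many independent factors, the factor at $p$ being $\sigma(p^{e})/p^{e}$ with density $(1-1/p)p^{-e}$; after truncating each exponent at a level where the remaining mass is negligible, $\mathbf{d}\{H_y\ge\tfrac{12}{5T}\}$ reduces to a finite machine computation of the step distribution function of $H_y$ with all truncation errors tracked, and one chooses $y$ and $T$ so that the two bounds sum to less than $0.0371813$. I expect this numerical step to be the main obstacle: the target is about $0.037$, roughly an order of magnitude below the crude Markov bound $\mathbf{d}\{H\ge\tfrac{12}{5}\}\le\tfrac{5}{12}\mathbb{E}[H]=\tfrac{5}{12}\cdot\tfrac{8\pi^{2}}{75}=\tfrac{2\pi^{2}}{45}\approx0.439$ (with $\mathbb{E}[H]=\prod_{p\ge7}(1-p^{-2})^{-1}$), so one must resolve the law of $H_y$ for a genuinely large $y$ while keeping the accumulated errors comfortably small.

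It remains to check that $\mathbf{d}B$ exists. Since $\sigma(m)/m=O(\log\log m)$, the quantity $\eta_n:=h(30n)/(30n+1)$ tends to $0$, and $n\in B$ if and only if $H(30n+1)\ge h(30n)-\eta_n$; hence $\{n:H(30n+1)\ge h(30n)\}\subseteq B$, and for each fixed $\delta>0$, $B$ minus this set is contained, for $n$ large, in $R_\delta:=\{n:h(30n)-\delta<H(30n+1)<h(30n)\}$. Now $\log h(30n)$ is an additive function of $n$ while $\log H(30n+1)=\log h(30n+1)$ is an additive function of $30n+1$, and $30n$, $30n+1$ are coprime; by the sieve machinery governing joint distributions of additive functions of consecutive integers (as in~\cite{Kobayashi2020}) the pair $(h(30n),H(30n+1))$ has a limiting joint law $\mu$, and $\mu$ charges neither the diagonal nor the lines $t=s-\delta$ since the marginal law of $H$ has no atoms (because $\prod_{p\ge7}(1-1/p)=0$). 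Hence $\{n:H(30n+1)\ge h(30n)\}$ has a density, $\mathbf{d}R_\delta\to0$ as $\delta\to0$, and therefore $B$ differs from $\{n:H(30n+1)\ge h(30n)\}$ by a set of density $0$; in particular $\mathbf{d}B$ exists and, by the preceding paragraphs, $\mathbf{d}B<0.0371813$. Finally, as $\mathbf{d}B<\tfrac12$, for all large $K$ the set $\{n\le K:\sigma(30n+1)<\sigma(30n)\}$ has $K-B(K)>\tfrac12K$ elements while $\{n\le K:\sigma(30n+1)>\sigma(30n)\}$ has at most $B(K)<\tfrac12K$, which yields Theorem~\ref{thm1}.
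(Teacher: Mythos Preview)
Your upper-bound strategy is genuinely different from the paper's and, if carried through, considerably sharper. The paper partitions $\mathbb{Z}_{+}$ according to the $y$-smooth parts $a=Y(30n+1)$ and $b=Y(30n)$ \emph{jointly}, then applies a first-moment inequality (Lemma~\ref{lem2}) on each cell $S(a,b)$; the number $0.0371813$ comes essentially from the trivially handled tail $ab>z=10^{9}$. You instead discard all information about $30n$ beyond the automatic lower bound $h(30n)\ge 12/5$ and reduce everything to the single marginal density $\mathbf{d}\{m:H(m)\ge 12/5\}$ with $H$ supported on primes $\ge 7$. Since $\prod_{7\le p\le P}(1+1/p)$ does not reach $12/5$ until roughly $P\approx 200$, this density is minute: already the crude Markov bound $(5/12)^{r}\,\mathbb{E}[H^{r}]$ with $r=10$ gives something below $10^{-3}$, far under $0.0371813$. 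So your route trades the paper's two-sided computation for a one-sided inequality that is both simpler and much stronger here. The paper's joint method is what one would need for a \emph{lower} bound on $\mathbf{d}B$ (as in \cite{Kobayashi2020}), and it is also the only option in the $2n$ versus $2n+1$ setting, where the analogous threshold $3/2$ is met by many small odd integers; for the stated upper bound on $\mathbf{d}B$ alone, your approach is cleaner.

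Two points need tightening. First, you never actually carry out the numerical step; given how comfortably the moment bound already beats $0.0371813$, it would be more convincing to present an explicit $r$ and a rigorous estimate of $\mathbb{E}[H^{r}]=\prod_{p\ge 7}(1-1/p)\sum_{e\ge 0}p^{-e}\sigma_{-1}(p^{e})^{r}$ (a finite product times a controllable tail) rather than promise a Del\'eglise-type computation. Second, your existence argument has a slip and a vague appeal: $\log h(30n)=\log\sigma_{-1}(30n)$ is additive in $30n$, not in $n$, so the phrase ``additive function of $n$'' is incorrect; and ``the sieve machinery \dots\ as in \cite{Kobayashi2020}'' is not a theorem you can cite. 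The paper handles existence by invoking Shapiro's result (Lemma~\ref{lem:6}) to show that $\sigma_{-1}(30n+1)-\sigma_{-1}(30n)$ has a continuous distribution function, hence $C=\{n:H(30n+1)\ge h(30n)\}$ has a density, and then proving $\mathbf{d}(B\setminus C)=0$ via the Erd\H{o}s--Pomerance--S\'ark\"ozy and Norton estimates. Your $R_{\delta}$ sandwich is morally the same argument, but as written it rests on a joint limit law you have not established.
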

	For any positive integer $n,$ let $\sigma_{-1} (n)$ be the sum of the reciprocal of all
	positive divisors of $n.$ It is clear that $ \sigma_{-1}(n)=\sigma(n)/n. $ To give the existence of $ \mathbf{d}B, $ we will prove that the sets $ B $ and $ C $ have equal densities, where
	$$C=\{n\geq 1: \sigma_{-1}(30n+1)\geq \sigma_{-1}(30n)\}.  $$
	\section{Existence of $ \mathbf{d}C $}
	
	\begin{definition}(\cite[p.55]{Shapiro}~)
	\ For a real arithmetic function $ f(n) $ defined on $ \mathbb{Z_{+}}, $ let $$F(\mathscr{S},w)=\lim_{x\rightarrow\infty}\frac{|\{n\leq x:n\in \mathscr{S},\ f(n)\leq w\}|}{|\{n\leq x:n\in \mathscr{S}\}|},$$
	where $w$ is a given real number, $ \mathscr{S}\subseteq \mathbb{Z_{+}}. $
	If the limit in the right hand exists except possibly at points of discontinuity of $ F(\mathscr{S},w), $ then $ F(\mathscr{S},w) $ is called the distribution function of $ f(n) $ relative to $ \mathscr{S} $. This limit will be called the $\mathscr{S}$-density of $\{n: n\in \mathscr{S}, f(n)\leq w\}.$
\end{definition}
\begin{definition}(\cite[Definition 3.1]{Shapiro}~)
\ Given a sequence of integer-valued arithmetic functions $ \lambda_{k}(n)\ (k=1,2,\ldots). $ An arithmetic function $ f(n) $ is said to be of type $\{\lambda_{k}\}$ relative to $ \mathscr{S} $ if $ f(\lambda_{k}(n)) $ converges in $\mathscr{S}$-density to $ f(n) $ and each of the $ f(\lambda_{k}(n)) $ has a distribution function relative to $ \mathscr{S}. $ In many applications,
\begin{equation*}
	\lambda_{k}(n)=\prod_{p\mid n\atop p\leq k}p^{v_{p}(n)}.
\end{equation*}
Henceforth, we refer to these as the standard type functions. If $ f(n) $ is of $ \lambda $-type with respect to these (relative to $ \mathscr{S} $), we shall say that $ f(n) $ is of standard type relative to $ \mathscr{S}. $
\end{definition}
\begin{lemma}\label{lem:3}
	If $ f(n) $ has a distribution function relative to $ \mathscr{S}, $ so does $ -f(n). $ Both $ f(n) $ and $ -f(n) $ are of the same $ \lambda $-type.
\end{lemma}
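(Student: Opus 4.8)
\emph{Proof strategy.} Write $N(x)=|\{n\le x:n\in\mathscr{S}\}|$ and set
\[
 F_x(w)=\frac{|\{n\le x:n\in\mathscr{S},\ f(n)\le w\}|}{N(x)},\qquad
 G_x(w)=\frac{|\{n\le x:n\in\mathscr{S},\ -f(n)\le w\}|}{N(x)},
\]
so that by hypothesis $F_x(w)\to F(\mathscr{S},w)$ as $x\to\infty$ for every $w$ outside the (necessarily at most countable) set $D$ of discontinuities of the non-decreasing function $F(\mathscr{S},\cdot)$, and what must be found is the limit of $G_x$. For each fixed $x$ both $F_x$ and $G_x$ are ordinary distribution functions: non-decreasing, right-continuous, equal to $0$ near $-\infty$ and to $1$ near $+\infty$. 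The plan is to express $G_x$ through $F_x$, pass to the limit, check that the resulting function fits the definition, and then read off the $\lambda$-type statement.

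Since $\{n:-f(n)\le w\}=\{n:f(n)\ge -w\}$ is, within $\mathscr{S}\cap[1,x]$, the complement of $\{n:f(n)<-w\}$, and since $N(x)^{-1}|\{n\le x:n\in\mathscr{S},\ f(n)<-w\}|$ is precisely the left-hand limit $F_x\bigl((-w)^{-}\bigr)$ of the step function $F_x$, one obtains the identity
\[
 G_x(w)=1-F_x\bigl((-w)^{-}\bigr).
\]
Thus everything reduces to controlling left-hand limits along the sequence $F_x$. If $v$ is a continuity point of $F(\mathscr{S},\cdot)$, then for any continuity point $v'<v$ we have $F_x(v')\le F_x(v^{-})\le F_x(v)$; letting $x\to\infty$ gives $F(\mathscr{S},v')\le\liminf_x F_x(v^{-})\le\limsup_x F_x(v^{-})\le F(\mathscr{S},v)$, and then letting $v'\uparrow v$ through continuity points (so that $F(\mathscr{S},v')\to F(\mathscr{S},v^{-})=F(\mathscr{S},v)$) forces $F_x(v^{-})\to F(\mathscr{S},v)$. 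Hence $G_x(w)\to 1-F(\mathscr{S},-w)$ for every $w$ with $-w\notin D$.

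To finish the first assertion I would let $G(\mathscr{S},\cdot)$ be the right-continuous non-decreasing function agreeing with $1-F(\mathscr{S},-w)$ whenever $-w\notin D$ (equivalently $G(\mathscr{S},w)=1-F(\mathscr{S},(-w)^{-})$ once $F(\mathscr{S},\cdot)$ is normalised to be right-continuous), and then verify, by running the left-limit computation in reverse, that the discontinuity set of $G(\mathscr{S},\cdot)$ is exactly $-D=\{-v:v\in D\}$. Since $G_x(w)\to G(\mathscr{S},w)$ for every $w\notin -D$, the limit defining the distribution function of $-f$ relative to $\mathscr{S}$ exists outside the discontinuity set of its own value, which is just what the definition demands; so $-f$ has a distribution function relative to $\mathscr{S}$.

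For the $\lambda$-type statement, assume $f$ is of type $\{\lambda_k\}$ relative to $\mathscr{S}$. First, $-f(\lambda_k(n))$ converges in $\mathscr{S}$-density to $-f(n)$, because $|{-f(\lambda_k(n))}-({-f(n)})|=|f(\lambda_k(n))-f(n)|$, so the exceptional level sets $\{n\in\mathscr{S}:|{-f(\lambda_k(n))}-({-f(n)})|\ge\varepsilon\}$ coincide with those for $f$ and therefore have the same $\mathscr{S}$-densities, which tend to $0$ as $k\to\infty$. Second, each $-f(\lambda_k(n))$ has a distribution function relative to $\mathscr{S}$, by applying the first part of the lemma to the arithmetic function $n\mapsto f(\lambda_k(n))$. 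Hence $-f$ is of type $\{\lambda_k\}$ relative to $\mathscr{S}$, i.e.\ of the same type as $f$, and the reverse implication is identical since $f=-(-f)$. The one place that I expect to need genuine care is the passage from the non-strict inequality occurring in $F(\mathscr{S},\cdot)$ to the strict inequality needed for the complement — that is, the appearance of the left-hand limit in the displayed identity and its control at continuity points; the rest is bookkeeping.
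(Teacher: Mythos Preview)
Your proof is correct and rests on the same observation the paper uses: at a continuity point $w_0$ of $F(\mathscr{S},\cdot)$ the $\mathscr{S}$-density of $\{n\in\mathscr{S}:f(n)=w_0\}$ vanishes, so the passage between $\le$ and $<$ (equivalently, between $F_x(v)$ and $F_x(v^{-})$) causes no trouble. The paper states only this one-sentence justification and declares the lemma trivial, whereas you have written out the full $\varepsilon$-bookkeeping; the underlying idea is identical.
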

Since the continuity of $ F(\mathscr{S},w) $ at point $ w_{0} $ means the $\mathscr{S}$-density of $\{n: n\in \mathscr{S}, f(n)=w_{0}\}$ is zero, Lemma \ref{lem:3} is trivial.
\begin{lemma}(\cite[p.62]{Shapiro}~)\label{lem:4}
\ An additive arithmetic function $ f^{+}(n) $ has a distribution function if and only if the series
\begin{equation*}
	\sum_{|f^{+}(p)|>1}\frac{1}{p},\quad \quad \sum_{|f^{+}(p)|\leq 1}\frac{f^{+}(p)}{p},\quad \quad \sum_{|f^{+}(p)|\leq 1}\frac{(f^{+}(p))^{2}}{p}
\end{equation*}
all converge. Similarly, for multiplicative function $ f^{*}(n), $ sufficient condition is that the series
\begin{equation*}
	\sum_{|f^{*}(p)-1|>\frac{1}{2}}\frac{1}{p},\quad \quad \sum_{|f^{*}(p)-1|\leq \frac{1}{2}}\frac{f^{*}(p)-1}{p},\quad \quad \sum_{|f^{*}(p)-1|\leq \frac{1}{2}}\frac{(f^{*}(p)-1)^{2}}{p}
\end{equation*}
all converge. In the case of $ f^{+}(n), $ continuity of the distribution function is equivalent to the divergence of
\begin{equation*}
	\sum_{f^{+}(p)\neq 0}\frac{1}{p}.
\end{equation*}
For $ f^{*}(n), $ continuity of the distribution function is insured by the divergence of
\begin{equation*}
	\sum_{f^{*}(p)\neq 1}\frac{1}{p}.
\end{equation*}
In both the case of additive and multiplicative arithmetic functions described above, the functions are of standard $\lambda$-type.
\end{lemma}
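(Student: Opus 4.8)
The plan is to recognize the first assertion as the Erd\H{o}s--Wintner theorem and the second as its multiplicative companion, and to prove both through the classical probabilistic model for additive functions, in which the exponents $v_{p}(n)$ of a ``random'' $n\le x$ behave, as $x\to\infty$, like independent random variables $v_{p}$ with $\mathbb P(v_{p}=a)=(1-1/p)p^{-a}$. I would handle the additive case in full and then reduce the multiplicative one to it. As a preliminary reduction, up to a discrete correction coming from the values $f^{+}(p^{k})$ with $k\ge 2$ at the finitely many relevant small primes --- a correction that always possesses a limiting distribution, since for any $\varepsilon>0$ the set of $n$ divisible by $p^{2}$ for some $p>y$ has density $\le\sum_{p>y}p^{-2}<\varepsilon$ --- it suffices to treat the strongly additive function $\tilde f(n)=\sum_{p\mid n}f^{+}(p)$.

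\emph{Sufficiency, additive case.} The truncations of the statement give $\tilde f(\lambda_{k}(n))=\sum_{p\mid n,\,p\le k}f^{+}(p)$, whose distribution function $F_{k}$ is the explicit finite convolution produced by the independence model above. Splitting the variables $\xi_{p}=f^{+}(p)\mathbf 1_{\{v_{p}\ge 1\}}$ according to whether $|f^{+}(p)|>1$, the three hypotheses are precisely Kolmogorov's three-series conditions for $\sum_{p}\xi_{p}$; hence this series converges almost surely and $F_{k}$ converges weakly to the law $F$ of $\sum_{p}\xi_{p}$. To identify $F$ as the distribution function of $\tilde f$ itself --- equivalently, to show $\tilde f$ is of standard $\lambda$-type in the sense of the paper --- I would prove that $\tilde f(n)-\tilde f(\lambda_{k}(n))\to 0$ in density as $k\to\infty$: its $|f^{+}(p)|\le 1$ part has mean square $\ll\sum_{p>k}f^{+}(p)^{2}/p+\bigl(\sum_{p>k}f^{+}(p)/p\bigr)^{2}$ by the Tur\'an--Kubilius inequality, while its large-value part is supported on a set of density $\le\sum_{p>k,\,|f^{+}(p)|>1}1/p$; both tend to $0$.

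\emph{Necessity and continuity, additive case.} Suppose $f^{+}$ has a distribution function $F$ with characteristic function $\varphi(t)=\lim_{x\to\infty}x^{-1}\sum_{n\le x}e^{itf^{+}(n)}$. Applying a mean-value theorem for multiplicative functions of modulus one (Delange, or the self-contained treatment in Shapiro's book) to the multiplicative function $n\mapsto e^{itf^{+}(n)}$ yields, for each $t$, the Euler product $\varphi(t)=\prod_{p}(1-1/p)\sum_{k\ge 0}e^{itf^{+}(p^{k})}/p^{k}$; and since $\varphi$ is continuous with $\varphi(0)=1$, hence nonzero near $0$, the same theorem forces $\sum_{p}\bigl(1-e^{itf^{+}(p)}\bigr)/p$ to converge for all small $t$. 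Taking real parts and using $1-\cos\theta\asymp\min(1,\theta^{2})$ extracts $\sum_{|f^{+}(p)|>1}1/p<\infty$ and $\sum_{|f^{+}(p)|\le 1}f^{+}(p)^{2}/p<\infty$, and integrating $\sum_{p}(1-e^{itf^{+}(p)})/p$ over $t\in[0,1]$ and taking imaginary parts then gives convergence of the conditionally convergent series $\sum_{|f^{+}(p)|\le 1}f^{+}(p)/p$. For the continuity criterion one shows that $(2T)^{-1}\int_{-T}^{T}|\varphi(t)|^{2}\,dt\to 0$ whenever $\sum_{f^{+}(p)\ne 0}1/p$ diverges, which again can be read off the Euler product for $|\varphi|^{2}$ and forces $F$ to be atomless.

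\emph{Multiplicative case, and the main obstacle.} For multiplicative $f^{*}$, call a prime $p$ bad if $|f^{*}(p)-1|>1/2$; the first hypothesis gives $\sum_{p\ \mathrm{bad}}1/p<\infty$, so by Borel--Cantelli only finitely many bad primes divide a typical $n$, while on the remaining primes $\log f^{*}(p)=(f^{*}(p)-1)+O\bigl((f^{*}(p)-1)^{2}\bigr)$ is well defined, so that the last two hypotheses are exactly the three-series conditions for $\sum_{p\ \mathrm{good}}\log f^{*}(p)\,\mathbf 1_{\{v_{p}\ge 1\}}$; thus $\prod_{p}f^{*}(p^{v_{p}})$ converges almost surely, and the truncation/Tur\'an--Kubilius argument above transfers this law to the empirical distribution of $f^{*}$, yielding the standard $\lambda$-type assertion as well. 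I expect the only genuine difficulty to lie in the necessity direction: the Euler-product identity for $\varphi(t)$ rests on a real mean-value theorem for multiplicative functions --- the sole deep analytic ingredient --- and recovering the three series from it, above all the conditionally convergent one, from the bare convergence of an infinite product rather than of the sum of its logarithms, is delicate and must be carried out by playing several values of $t$ off against one another. Everything else is routine bookkeeping with the independence model and the Tur\'an--Kubilius inequality.
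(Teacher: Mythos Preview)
The paper does not prove this lemma at all: it is quoted as a citation from Shapiro's article \cite[p.~62]{Shapiro}, with no argument supplied in the present paper. There is therefore nothing to compare your attempt against; the lemma functions here purely as a black box feeding into Lemma~\ref{lem:7}.

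That said, your sketch is the standard route to the Erd\H{o}s--Wintner theorem and its multiplicative companion (Kolmogorov three-series for sufficiency, a Delange-type mean-value theorem for necessity, Tur\'an--Kubilius to pass from the probabilistic model to actual densities and establish the standard $\lambda$-type claim), and it is correct in outline. One small omission: for the additive continuity criterion the statement asserts an \emph{equivalence}, but you only argue that divergence of $\sum_{f^{+}(p)\neq 0}1/p$ forces $F$ to be atomless. The converse is easy --- if that sum converges, the set of $n$ divisible by no prime $p$ with $f^{+}(p)\neq 0$ has density $\prod_{f^{+}(p)\neq 0}(1-1/p)>0$, and on that set the strongly additive truncation vanishes identically, producing an atom --- but it should be mentioned. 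Your caveat about the delicacy of extracting the conditionally convergent series in the necessity direction is well placed; that step does require some care (uniformity in $t$ before interchanging the sum over $p$ with the integral over $t$), though it is routine once one has Delange's theorem in hand.
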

\begin{definition}(\cite[Definitions 3.4 and 4.3]{Shapiro}~)
\ A subset $\mathscr{S}$ of positive integers is called progressive subset if and only if the intersection of $ \mathscr{S} $ with any given arithmetic progression $ an+b $ has an $\mathscr{S}$-density.

A progressive subset $\mathscr{S}$ of positive integers is called multiplicative subset if and only if the $\mathscr{S}$-density of the intersection of two arithmetic progressions with relatively prime moduli is the product of their respective $\mathscr{S}$-densities.
\end{definition}
\begin{lemma}(\cite[Theorem 4.4]{Shapiro}~)\label{lem:6}
\ Let $\mathscr{S}$ be a multiplicative subset of positive integers such that for every prime $ p, $
$$\lim_{\alpha\rightarrow\infty}\rho(p^{\alpha})=0,\quad \quad  \sum_{p}\rho^{2}(p)<\infty,$$
where
$ \rho(a)=\max\limits_b \rho_{a,b}, $ $\rho_{a,b} $ is the $\mathscr{S}$-density of the arithmetic progression $ \{ an+b \} $. Consider linear forms $ a_{i}n+b_{i}, i=1,2,\ldots,s $ such that $ a_{i}b_{j}- a_{j}b_{i}\neq 0\ (i\neq j),\ a_{i}\neq 0,$ and given arithmetic functions $ f_{1},f_{2}\ldots,f_{s} $ of standard type. Then, if the distribution function of $ f_{1} $ is continuous,
\begin{equation*}
	f_{1}(a_{1}n+b_{1})+f_{2}(a_{2}n+b_{2})+\cdots+f_{s}(a_{s}n+b_{s})
\end{equation*}
has a continuous distribution function.
\end{lemma}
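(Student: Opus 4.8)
The plan is to prove the two assertions---existence and continuity of the distribution function (relative to $\mathscr{S}$) of $g(n):=\sum_{i=1}^{s}f_i(a_i n+b_i)$---by the probabilistic method for standard-type functions. For existence I would use that each $f_i$ is of standard type relative to $\mathscr{S}$, so the truncations $g^{(k)}(n):=\sum_{i=1}^{s}f_i(\lambda_k(a_i n+b_i))$ converge in $\mathscr{S}$-density to $g$, reducing the problem to finitely many primes where the multiplicative property of $\mathscr{S}$ can be applied. For continuity I would isolate from $f_1(a_1 n+b_1)$ a large-prime component that is both atomless and asymptotically independent of the other forms, so that it destroys any potential atom of the full sum; this is the step where the non-degeneracy $a_i b_j-a_j b_i\neq 0$ is indispensable.

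\emph{Existence.} Fix $k$. The value $g^{(k)}(n)$ depends only on the finite family of valuations $v_p(a_i n+b_i)$ with $p\le k$ and $1\le i\le s$. For each prescribed valuation pattern the set $\{n: v_p(a_i n+b_i)=\alpha_{p,i}\ \text{for all admissible } p,i\}$ is a finite union of arithmetic progressions, hence carries an $\mathscr{S}$-density because $\mathscr{S}$ is progressive, and the multiplicative property of $\mathscr{S}$ factors these densities across distinct primes. Summing over the patterns, using the geometric decay of the local densities together with $\lim_{\alpha\to\infty}\rho(p^\alpha)=0$ to guarantee tightness, shows that each $g^{(k)}$ has a distribution function relative to $\mathscr{S}$. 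Since $g^{(k)}\to g$ in $\mathscr{S}$-density---the truncation error being controlled by the tail $\sum_{p>k}\rho^{2}(p)\to 0$, a Tur\'an--Kubilius-type variance bound supplied by $\sum_p\rho^2(p)<\infty$---the distribution functions of the $g^{(k)}$ converge, and their limit is the distribution function of $g$, which therefore exists at its continuity points.

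\emph{Continuity.} Write $g(n)=f_1(a_1 n+b_1)+h(n)$ with $h(n):=\sum_{i\ge 2}f_i(a_i n+b_i)$, and split $f_1(a_1 n+b_1)$ into the contributions of the primes $p\le T$ and $p>T$. If a prime $p>T$ divides none of the finitely many nonzero integers $a_i$ and $a_i b_j-a_j b_i$, then $p\mid a_1 n+b_1$ forces $p\nmid a_j n+b_j$ for every $j\ge 2$; such primes thus never contribute to $h(n)$, and by the multiplicative property of $\mathscr{S}$ the large-prime part of $f_1(a_1 n+b_1)$ is asymptotically independent of $h(n)$. Because the distribution function of $f_1$ is continuous, the divergence criterion of Lemma \ref{lem:4} (for instance $\sum_{f_1(p)\neq 0}1/p=\infty$ in the additive case) persists after discarding the primes $\le T$, so this large-prime part already has an atomless distribution. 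Convolving an atomless, asymptotically independent component into the distribution of $g$ removes every atom; hence the $\mathscr{S}$-density of $\{n: g(n)=w_0\}$ is zero for each real $w_0$, which is precisely the continuity of the distribution function of $g$.

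The principal obstacle I anticipate is to make the two decoupling steps uniform. In the existence step one must justify interchanging the limit in the truncation level $k$ with the limit in $x$ defining the $\mathscr{S}$-density, which needs the variance bound from $\sum_p\rho^2(p)<\infty$ and $\lim_{\alpha}\rho(p^\alpha)=0$ to control the high prime-power divisors uniformly in $x$. In the continuity step one must quantify the asymptotic independence of the large-prime part of $f_1(a_1 n+b_1)$ from $h(n)$ and verify that the residual dependence through the finitely many common ``bad'' primes cannot manufacture an atom. These estimates, where the hypotheses on $\mathscr{S}$ and the non-degeneracy of the forms act together, form the technical heart of the proof.
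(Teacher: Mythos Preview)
The paper does not prove this lemma at all; it is quoted as Theorem~4.4 of Shapiro~\cite{Shapiro} and used as a black box. Your proposal is therefore not a variant of the paper's argument but an attempt to reconstruct Shapiro's own proof. The architecture you describe---truncate via $\lambda_k$, use the progressive/multiplicative structure of $\mathscr{S}$ to obtain distribution functions for the truncations, pass to the limit, then for continuity isolate the large-prime part of $f_1(a_1n+b_1)$ and use $a_ib_j-a_jb_i\neq 0$ to decouple it from the other forms so that its atomlessness smooths the convolution---is indeed the skeleton of Shapiro's method.

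One place where your sketch is genuinely loose deserves to be flagged explicitly rather than buried in the ``obstacles'' paragraph. The hypothesis that $f_i$ is of standard type relative to $\mathscr{S}$ says that $f_i(\lambda_k(m))\to f_i(m)$ in $\mathscr{S}$-density as $m$ ranges over $\mathscr{S}$; what you actually use is that $f_i(\lambda_k(a_in+b_i))\to f_i(a_in+b_i)$ in $\mathscr{S}$-density as $n$ ranges over $\mathscr{S}$. These are not the same statement, and transferring the convergence through the linear form is precisely where the hypotheses $\lim_\alpha\rho(p^\alpha)=0$ and $\sum_p\rho^2(p)<\infty$ must do work. Without this transfer lemma the truncation scheme never gets started. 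A second, smaller point: in the continuity step you invoke the divergence criterion of Lemma~\ref{lem:4} to conclude that the large-prime tail of $f_1$ remains atomless, but Lemma~\ref{lem:4} applies only to additive or multiplicative $f_1$, whereas the present statement assumes only ``standard type'' together with continuity of the distribution of $f_1$. You need instead to argue directly that subtracting the small-prime part (a discrete, asymptotically independent component) from an atomless distribution leaves an atomless distribution.
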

Following the definitions and lemmas mentioned above, we can conclude that 
\begin{lemma}\label{lem:7}
	For given arithmetic functions $ f_{1},f_{2}\ldots,f_{s} $ satisfying the conditions of Lemma \ref{lem:4} and linear forms $ a_{i}n+b_{i}, i=1,2,\ldots,s $ satisfying $ a_{i}b_{j}-a_{j}b_{i}\neq 0\ (i\neq j),\ a_{i}\neq 0,$ let
	$$ D=\{n\geq 1:f_{1}(a_{1}n+b_{1})+f_{2}(a_{2}n+b_{2})+\cdots+f_{s}(a_{s}n+b_{s})\leq w\} .$$ Then $ D $ has a density for each given real number $w.$ 
\end{lemma}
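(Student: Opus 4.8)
The plan is to derive Lemma \ref{lem:7} from Lemma \ref{lem:6}, applied with the ambient set $\mathscr{S}=\mathbb{Z}_{+}$, so that ``$\mathscr{S}$-density'' becomes ordinary natural density and ``distribution function relative to $\mathscr{S}$'' becomes the usual one. The first task is thus to verify that $\mathbb{Z}_{+}$ is a multiplicative subset satisfying the two $\rho$-hypotheses of Lemma \ref{lem:6}. For any linear form $an+b$ with $a\ge 1$, the set $\{an+b:n\ge 1\}$ is, up to finitely many terms, a full residue class modulo $a$, hence has natural density $1/a$; therefore $\rho_{a,b}=1/a$ for every $b$, so $\rho(a)=\max_{b}\rho_{a,b}=1/a$. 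In particular every arithmetic progression has a $\mathbb{Z}_{+}$-density, so $\mathbb{Z}_{+}$ is progressive, and for coprime moduli $a,a'$ the Chinese Remainder Theorem shows that $\{an+b\}\cap\{a'n+b'\}$ is a single residue class modulo $aa'$, of density $1/(aa')=\rho(a)\rho(a')$, so $\mathbb{Z}_{+}$ is multiplicative. Finally $\rho(p^{\alpha})=p^{-\alpha}\to 0$ as $\alpha\to\infty$ for each prime $p$, and $\sum_{p}\rho^{2}(p)=\sum_{p}p^{-2}<\infty$.

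Next I would check the remaining hypotheses of Lemma \ref{lem:6}. The linear forms $a_{i}n+b_{i}$ are admissible by assumption: $a_{i}\neq 0$ and $a_{i}b_{j}-a_{j}b_{i}\neq 0$ for $i\neq j$. Each $f_{i}$ satisfies the convergence conditions of Lemma \ref{lem:4}, so by that lemma each $f_{i}$ has a distribution function and is of standard $\lambda$-type; moreover, after relabelling we may take $f_{1}$ to be the one whose distribution function is continuous (this continuity being the divergence of $\sum_{f_{1}(p)\neq 0}1/p$ in the additive case, resp. of $\sum_{f_{1}(p)\neq 1}1/p$ in the multiplicative case, as recorded in Lemma \ref{lem:4}). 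Lemma \ref{lem:6} then yields that
\[
g(n):=f_{1}(a_{1}n+b_{1})+f_{2}(a_{2}n+b_{2})+\cdots+f_{s}(a_{s}n+b_{s})
\]
has a continuous distribution function $G(w)$ relative to $\mathbb{Z}_{+}$.

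It then remains to convert ``$g$ has a continuous distribution function'' into ``$D$ has a density for every $w$''. By the definition of distribution function, $G(w)=\lim_{x\to\infty}|\{n\le x:g(n)\le w\}|/x$ holds at every point of continuity of $G$; since $G$ is continuous, there are no exceptional points, so the limit exists for all real $w$. Hence $D=\{n\ge 1:g(n)\le w\}$ has natural density $\mathbf{d}D=G(w)$ for each $w$, which is exactly the claim.

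I expect the one genuinely delicate point to be the continuity input: Lemma \ref{lem:4} only guarantees that each $f_{i}$ possesses a distribution function, and Lemma \ref{lem:6} additionally requires at least one of them to have a \emph{continuous} distribution function. If that were not available for any $f_{i}$, one could not invoke Lemma \ref{lem:6} directly and would instead have to argue that the (possibly discontinuous) distribution function of $g$ still forces $\{g\le w\}=\{g<w\}\cup\{g=w\}$ to have a density at every $w$, which is more delicate. By contrast, checking the multiplicative-subset axioms and the $\rho$-estimates for $\mathbb{Z}_{+}$, and performing the final translation, are routine; the mathematical content is carried entirely by Lemma \ref{lem:6}.
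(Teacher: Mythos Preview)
Your proposal is correct and follows essentially the same route as the paper: verify that $\mathbb{Z}_{+}$ is a multiplicative subset with $\rho(p^{\alpha})=p^{-\alpha}\to 0$ and $\sum_{p}\rho(p)^{2}<\infty$ via the Chinese Remainder Theorem, invoke Lemma~\ref{lem:4} to conclude the $f_i$ are of standard type with $f_1$ having a continuous distribution function, and then apply Lemma~\ref{lem:6}. Your discussion of the continuity requirement is more explicit than the paper's (which simply asserts that $f_1$ has a continuous distribution function), and your observation that this is the only genuinely non-routine hypothesis is apt; in the paper's intended application to $\sigma_{-1}$ the divergence of $\sum_{p}1/p$ supplies it.
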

By Lemma \ref{lem:3}, each $ f_{i} $ can be replaced by $ -f_{i} $ for $ i=1,2,\ldots,s. $
\begin{proof}
	By the Chinese remainder theorem, $ \mathbb{Z_{+}} $ is a multiplicative subset. For every prime $ p, $
	$$
	\lim_{\alpha\rightarrow\infty}\rho(p^{\alpha})=\lim_{\alpha\rightarrow\infty}\frac{1}{p^{\alpha}}=0,
	\quad \quad  \sum_{p}\rho^{2}(p)=\sum_{p}\frac{1}{p^{2}}<\sum_{n=1}^{\infty}\frac{1}{n^{2}}=\frac{\pi^{2}}{6}.
	$$
	By Lemma \ref{lem:4}, $ f_{1},f_{2}\ldots,f_{s} $ are of standard type and $ f_{1} $ has a continuous distribution function. Thus, by Lemma \ref{lem:6}, $ D $ has a density. 
	
	This completes the proof of Lemma \ref{lem:7}.
\end{proof}

Let $ w =0,\ f(n)=\sigma_{-1}(30n)-\sigma_{-1}(30n+1) $. It follows from $ \sigma_{-1}(p)-1=1/p $ that
$$
	\sum_{|\sigma_{-1}(p)-1|>\frac{1}{2}}\frac{1}{p}=0,
\quad \quad \sum_{|\sigma_{-1}(p)-1|\leq \frac{1}{2}}\frac{\sigma_{-1}(p)-1}{p}=\sum_{p}\frac{1}{p^{2}},
\quad \quad \sum_{|\sigma_{-1}(p)-1|\leq \frac{1}{2}}\frac{(\sigma_{-1}(p)-1)^{2}}{p}=\sum_{p}\frac{1}{p^{3}}
$$
are all converge. But
\begin{equation*}
	\sum_{\sigma_{-1}(p)\neq 1}\frac{1}{p}=\sum_{p}\frac{1}{p}
\end{equation*}
diverges.
Therefore, $ \sigma_{-1} $ satisfies the conditions of Lemma \ref{lem:4}. 
By Lemma \ref{lem:7}, $ C $ has a density.

	\section{Notations and preliminary lemmas}
	
	\begin{lemma}(\cite[p.253]{Erdos1987}~)\label{lemD1}
		For large $ x, $ 
		$$|\left\{1\leq n\leq x: \sigma(n+1)=\sigma(n) \right\}|\leq x/e^{(\log x)^{1/3}}.  
		$$
	\end{lemma}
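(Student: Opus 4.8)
The plan is to follow the classical large‑prime‑factor device of Erd\H{o}s. First, fix $y=\exp\bigl((\log x)^{2/3}\bigr)$ and delete from the count every $n\le x$ for which $n$ or $n+1$ is $y$‑smooth, or is divisible by $p^{2}$ for some prime $p>y$. The number of $y$‑smooth integers up to $x$ is $\Psi(x,y)$, and a standard upper bound (Rankin's trick, say) gives $\Psi(x,y)\le x\exp\bigl(-(\log x)^{1/3}\bigr)$ for all large $x$, while $\sum_{p>y}x/p^{2}\ll x/(y\log y)$ is far smaller; hence the deleted set has size $\ll x\exp\bigl(-(\log x)^{1/3}\bigr)$, and this is exactly where the stated bound originates. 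For every remaining $n$ with $\sigma(n+1)=\sigma(n)$ we may write $n=pa$ and $n+1=qb$ with $p,q$ primes, $p,q>y$, $p\nmid a$, $q\nmid b$, $p\neq q$, and $a,b<x/y$.

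Next I would combine $(p+1)\sigma(a)=(q+1)\sigma(b)$ (which is $\sigma(n)=\sigma(n+1)$) with $pa+1=qb$. Eliminating $q$ gives
\[
p\bigl(b\,\sigma(a)-a\,\sigma(b)\bigr)=\sigma(b)+b\bigl(\sigma(b)-\sigma(a)\bigr),
\]
and there is a symmetric identity expressing $q$ in terms of $a$ and $b$. If $b\,\sigma(a)=a\,\sigma(b)$, i.e.\ $\sigma_{-1}(a)=\sigma_{-1}(b)$, the right‑hand side above must also vanish, which forces $a=b+1$; then $\sigma_{-1}(b+1)=\sigma_{-1}(b)$ together with $\gcd(b,b+1)=1$ forces $\sigma(b)/b$ to be an integer $k\ge 2$, so $b$ and $b+1$ are both $k$‑perfect, which does not happen in the relevant ranges and is in any case dominated by the classical estimate $|\{m\le x:m\mid\sigma(m)\}|=x^{o(1)}$. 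In the generic case $b\,\sigma(a)\neq a\,\sigma(b)$, the identity (or its symmetric partner) determines $p$ — hence $n$ — from the pair $(a,b)$, and comparing the two sides, the requirement $p,q>y$ forces the nonzero integer $b\,\sigma(a)-a\,\sigma(b)$ to be abnormally small, so that
\[
0<|\sigma_{-1}(a)-\sigma_{-1}(b)|\ll e^{-(\log x)^{2/3}}\log\log x .
\]

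The remaining and hardest task is to bound the number of generic pairs $(a,b)$, $a,b<x/y$, for which $\sigma_{-1}(a)$ and $\sigma_{-1}(b)$ are this close but unequal and the resulting $p=\bigl(\sigma(b)+b(\sigma(b)-\sigma(a))\bigr)\big/\bigl(b\,\sigma(a)-a\,\sigma(b)\bigr)$ is a genuine prime exceeding $y$ with $q=(pa+1)/b$ also prime. This is where the real work lies: one must exploit simultaneously that $n$ is determined by $(a,b)$, that the nonzero integer $b\,\sigma(a)-a\,\sigma(b)$ is confined to a very short interval, and that $p$ and $q$ are prime, combining Brun‑type sieve upper bounds for primes in arithmetic progressions with estimates for the number of solutions of Diophantine equations of the shape $\sigma(b)=\alpha b+\beta$; a dyadic decomposition according to the size of the largest prime factor of $n$, followed by a re‑optimization of $y$ against these inputs, is what finally delivers the exponent $(\log x)^{1/3}$. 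I expect this counting, together with confirming that the degenerate ``consecutive $k$‑perfect'' configurations truly contribute nothing, to be the main obstacles; the reductions leading up to them are routine.
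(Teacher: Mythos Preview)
The paper gives no proof of this lemma at all: it is quoted verbatim from Erd\H{o}s--Pomerance--S\'ark\"ozy \cite{Erdos1987} and used as a black box. So there is no ``paper's own proof'' to compare your attempt against; within the present paper the correct move is simply to cite \cite{Erdos1987}.

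As for your sketch itself, it follows the large--prime--factor reduction of \cite{Erdos1987} in outline, and your handling of the smooth and squarefull exceptional sets with $y=\exp((\log x)^{2/3})$ is fine. But the proposal is, as you explicitly concede, not a proof: the central counting step---bounding the number of admissible pairs $(a,b)$---is left undone, and two of the intermediate claims need repair. First, the displayed inequality $|\sigma_{-1}(a)-\sigma_{-1}(b)|\ll e^{-(\log x)^{2/3}}\log\log x$ does not follow from your identity without a lower bound on $ab$: dividing $|b\sigma(a)-a\sigma(b)|$ by $ab$ is harmless only when $a$ and $b$ are both moderately large, and nothing in your setup prevents $a=1$ (e.g.\ $n$ prime). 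Second, in the degenerate case $b\sigma(a)=a\sigma(b)$ your identity reduces to $0=0$, so $p$ is \emph{not} determined by $(a,b)$; you still need to argue that for each of the (very few) multiperfect $b$ the residual constraints $b\mid p+1$ and $q=(p(b+1)+1)/b$ prime keep the contribution negligible. None of this is needed for the present paper, which only requires the statement.
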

	\begin{lemma}(\cite[p.119]{Gronwall}~)\label{lemD2}
		We have 
		$$ \limsup_{n\to\infty}\frac{\sigma(n)/n}{\log\log n}=e^{\gamma}, 
		$$
		where $ \gamma $ denotes the Euler constant.
	\end{lemma}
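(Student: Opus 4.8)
The statement is Grönwall's classical theorem, so the plan is the standard two‑sided argument; the only external inputs are Mertens' third theorem in the form $\prod_{p\le y}\left(1-\frac1p\right)^{-1}=e^{\gamma}(\log y)(1+o(1))$ and the prime number theorem in the form $\theta(y):=\sum_{p\le y}\log p\sim y$ (for the upper bound, Chebyshev's $\theta(y)\asymp y$ already suffices). The plan is to prove $\limsup_{n\to\infty}\frac{\sigma(n)/n}{\log\log n}\le e^{\gamma}$ and $\limsup_{n\to\infty}\frac{\sigma(n)/n}{\log\log n}\ge e^{\gamma}$ separately.

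For the upper bound, suppose $n$ has exactly $k$ distinct prime divisors $q_1<q_2<\cdots<q_k$. From $\sigma(p^{a})/p^{a}=\frac{1-p^{-a-1}}{1-p^{-1}}<\frac{p}{p-1}$ and the multiplicativity of $\sigma(n)/n$,
$$\frac{\sigma(n)}{n}<\prod_{i=1}^{k}\frac{q_i}{q_i-1}\le\prod_{i=1}^{k}\frac{p_i}{p_i-1},$$
where $p_i$ is the $i$‑th prime and we used $q_i\ge p_i$ together with the monotonicity of $t\mapsto t/(t-1)$. Also $n\ge q_1\cdots q_k\ge p_1\cdots p_k=e^{\theta(p_k)}$, so $\log\log n\ge\log\theta(p_k)$ once $k\ge2$. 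Hence for every such $n$ with $k\ge 2$,
$$\frac{\sigma(n)/n}{\log\log n}\le\frac{\prod_{i\le k}p_i/(p_i-1)}{\log\theta(p_k)}=:h(k),$$
and by Mertens' theorem and $\theta(p_k)\sim p_k$ the right‑hand side equals $\dfrac{e^{\gamma}\log p_k\,(1+o(1))}{\log p_k\,(1+o(1))}\to e^{\gamma}$ as $k\to\infty$; in particular $h$ is bounded and $h(k)\le e^{\gamma}+\varepsilon$ for all $k\ge K_\varepsilon$. For each of the finitely many smaller values of $k$ one has $\sigma(n)/n=O_\varepsilon(1)$ while $\log\log n\to\infty$, so $\frac{\sigma(n)/n}{\log\log n}\to 0$ along those $n$. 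Therefore $\frac{\sigma(n)/n}{\log\log n}\le e^{\gamma}+\varepsilon$ for all sufficiently large $n$, and letting $\varepsilon\to0$ gives $\limsup\le e^{\gamma}$.

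For the lower bound I would test the ratio on $n_{y}=\prod_{p\le y}p^{m}$ for a fixed integer $m\ge1$. As $y\to\infty$,
$$\frac{\sigma(n_{y})}{n_{y}}=\prod_{p\le y}\frac{1-p^{-m-1}}{1-p^{-1}}=\Big(\prod_{p\le y}\big(1-p^{-m-1}\big)\Big)\prod_{p\le y}\Big(1-\frac1p\Big)^{-1}\sim\frac{1}{\zeta(m+1)}\,e^{\gamma}\log y,$$
since $\prod_{p\le y}(1-p^{-m-1})\to\zeta(m+1)^{-1}$ and $\prod_{p\le y}(1-1/p)^{-1}\sim e^{\gamma}\log y$; meanwhile $\log n_{y}=m\,\theta(y)$, so $\log\log n_{y}=\log m+\log\theta(y)\sim\log y$. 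Hence $\frac{\sigma(n_{y})/n_{y}}{\log\log n_{y}}\to e^{\gamma}/\zeta(m+1)$, so $\limsup_{n\to\infty}\frac{\sigma(n)/n}{\log\log n}\ge e^{\gamma}/\zeta(m+1)$; letting $m\to\infty$ and using $\zeta(m+1)\to1$ yields $\limsup\ge e^{\gamma}$, which with the previous paragraph proves the claim. The analytic substance — Mertens' third theorem, carrying the constant $e^{\gamma}$, and the prime number theorem for $\theta(y)$ — is exactly what the original argument rests on; the only delicate point is pushing $\theta(p_k)\sim p_k$ through one more logarithm to get $\log\theta(p_k)\sim\log p_k$, and handling the bounded‑$\omega(n)$ case separately. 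Since the lemma is quoted from \cite{Gronwall}, in the write‑up it would simply be cited rather than reproved.
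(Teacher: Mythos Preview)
Your argument is correct and is the standard two-sided proof of Gr\"onwall's theorem via Mertens' third theorem and $\theta(y)\sim y$. The paper itself does not prove this lemma at all---it is simply quoted with a citation to \cite{Gronwall}---which you already anticipate in your final sentence.
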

	\begin{lemma}(\cite[(5.10)
		and (5.12)]{Norton}~)\label{lemD3}
		Let $Q(\alpha) = (1 + \alpha) \log(1 + \alpha)-\alpha.  $ Then for large $ x, $ 
		$$|\left\{1\leq n\leq x: \omega(n)\leq (1-\alpha) \log\log x\right\}|\leq x/(\log x)^{Q(-\alpha)},  
		$$	
		where $ \omega(n) $ denotes the number of distinct prime
		factors of $ n $ and $ 0<\alpha<1. $
	\end{lemma}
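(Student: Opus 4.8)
The plan is to establish this as a large–deviation bound for $\omega$ by Rankin's trick, and then to read the exponent $Q(-\alpha)$ off the resulting one–parameter optimisation. Write $L=\log\log x$ and $K=(1-\alpha)L$. For a free parameter $z\in(0,1]$, the condition $\omega(n)\le K$ forces $z^{\omega(n)}\ge z^{K}$ (as $t\mapsto z^{t}$ is decreasing), so
\begin{equation*}
	\bigl|\{1\le n\le x:\omega(n)\le K\}\bigr|\ \le\ \sum_{\substack{n\le x\\ \omega(n)\le K}}z^{\omega(n)-K}\ \le\ z^{-K}\sum_{n\le x}z^{\omega(n)}.
\end{equation*}
The arithmetic function $n\mapsto z^{\omega(n)}$ is multiplicative and equals $z$ on every prime power, so its Dirichlet series factors as $\sum_{n\ge1}z^{\omega(n)}n^{-s}=\zeta(s)^{z}G(s)$ with $G$ holomorphic and non-vanishing for $\Re s>\tfrac12$. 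The Selberg–Delange method (or, more elementarily, the Hardy–Ramanujan inequality for $|\{n\le x:\omega(n)=k\}|$ followed by summation over $k\le K$ and Stirling's formula) then yields $\sum_{n\le x}z^{\omega(n)}\ll x(\log x)^{z-1}$ for large $x$.

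Feeding this back in gives $\bigl|\{1\le n\le x:\omega(n)\le K\}\bigr|\ll x(\log x)^{\,z-1-(1-\alpha)\log z}$, and it remains to minimise the exponent $h(z)=z-1-(1-\alpha)\log z$ over $z\in(0,1]$. Here $h$ is strictly convex with $h'(z)=1-(1-\alpha)/z$, so its minimum occurs at the interior point $z=1-\alpha$, where
\begin{equation*}
	h(1-\alpha)=(1-\alpha)-1-(1-\alpha)\log(1-\alpha)=-\bigl(\alpha+(1-\alpha)\log(1-\alpha)\bigr)=-Q(-\alpha)
\end{equation*}
by the definition of $Q$. Choosing $z=1-\alpha$ therefore gives $\bigl|\{1\le n\le x:\omega(n)\le(1-\alpha)\log\log x\}\bigr|\ll_{\alpha} x(\log x)^{-Q(-\alpha)}$.

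The one genuinely delicate point — and the reason I would simply invoke \cite[(5.10) and (5.12)]{Norton} rather than reproduce the computation — is upgrading this ``$\ll_{\alpha}$'' to the stated inequality with leading constant exactly $1$: bare Rankin leaves an $\alpha$-dependent constant out front, so one needs the secondary saving of a factor of order $(\log\log x)^{-1/2}$ that the sharp forms of the above estimates supply (the $\Gamma(z)^{-1}$ appearing in the Selberg–Delange asymptotic, or equivalently the $\sqrt{K}$ in the denominator when Stirling is applied to $(K-1)!$); since that factor tends to $0$, the clean bound holds once $x$ is large. This secondary bookkeeping is exactly the content of Norton's estimates, whereas the structural steps — multiplicativity, the $\zeta(s)^{z}$-factorisation, and the convex optimisation — are routine.
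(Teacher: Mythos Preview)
Your argument is correct, and in fact you have supplied more than the paper does: the paper gives no proof of this lemma at all but simply cites \cite[(5.10) and (5.12)]{Norton} as a black box. Your Rankin-trick derivation with the optimisation at $z=1-\alpha$ is exactly the standard route to such large-deviation bounds for $\omega$, and your computation $h(1-\alpha)=-Q(-\alpha)$ is right. You also correctly flag the one honest subtlety, namely that bare Rankin only gives $\ll_{\alpha}$ and that the upgrade to implied constant $1$ requires the secondary $(\log\log x)^{-1/2}$ saving from the sharper Selberg--Delange or Hardy--Ramanujan estimates; since the paper only needs this lemma to show that certain exceptional sets $D_2,D_3$ have density zero (see the proof of Lemma~\ref{lem0}), even your $\ll_{\alpha}$ version would suffice for the application.
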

	Recall that $$ B=\{n\geq 1: \sigma(30n+1)\geq \sigma(30n)\},\quad \quad C=\{n\geq 1: \sigma_{-1}(30n+1)\geq \sigma_{-1}(30n)\}.  $$
	\begin{lemma}\label{lem0}
	$ B $ has a density and
	$\mathbf{d}B=\mathbf{d}C.$	
	\end{lemma}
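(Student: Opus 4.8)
The plan is to show that the symmetric difference $B \triangle C$ has density zero; since $C$ is already known to have a density (as established at the end of Section 2), this immediately yields that $B$ has a density and that $\mathbf{d}B=\mathbf{d}C$. The key observation is that for a positive integer $m$, the inequality $\sigma(m+1)\geq\sigma(m)$ and the inequality $\sigma_{-1}(m+1)\geq\sigma_{-1}(m)$ differ only because of the extra factor of $m$ versus $m+1$ hidden in $\sigma_{-1}(m)=\sigma(m)/m$. Writing $m=30n$, an integer $n$ lies in $B\setminus C$ or $C\setminus B$ only if $\sigma(30n+1)$ and $\sigma(30n)$ are so close together that rescaling by the ratio $30n/(30n+1)$ flips the inequality. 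So the heart of the matter is to bound the number of $n\leq x$ for which $\sigma(30n)$ and $\sigma(30n+1)$ are within a tiny multiplicative factor of each other.

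First I would make this precise: $n\in B\triangle C$ forces
\[
\Bigl|\frac{\sigma(30n+1)}{\sigma(30n)}-1\Bigr|\leq \frac{1}{30n},
\]
equivalently $|\sigma(30n)-\sigma(30n+1)|\leq \sigma(30n)/(30n)=\sigma_{-1}(30n)$. By Lemma \ref{lemD2} (Gronwall's theorem), $\sigma_{-1}(30n)\ll \log\log n$ for all large $n$, so $n\in B\triangle C$ implies $|\sigma(30n)-\sigma(30n+1)|\ll \log\log n$, in particular $|\sigma(30n)-\sigma(30n+1)|\leq (\log\log x)^2$ say, for $n\leq x$ and $x$ large. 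Next I would count the $n\leq x$ with this property. Consecutive-integer values $\sigma(m)$ and $\sigma(m+1)$ being this close is a strong restriction; the standard way to exploit it is to note that if $\sigma(m+1)-\sigma(m)$ is small in absolute value, then either $\sigma(m+1)=\sigma(m)$ exactly — and Lemma \ref{lemD1} caps those at $x/e^{(\log x)^{1/3}}$ — or else at least one of $m=30n$, $m+1=30n+1$ has few prime factors / a controlled structure forcing $\sigma$ to be small, which we can bound via Lemma \ref{lemD3}.

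The cleanest route uses Lemma \ref{lemD3} directly: if $\omega(30n+1)\geq (1-\alpha)\log\log x$ for a suitable fixed $\alpha\in(0,1)$, then $30n+1$ has many distinct prime factors, each contributing a factor $(1+1/p)$ to $\sigma_{-1}$, and in fact $\sigma(30n+1)$ is then large — more to the point, one wants to show $\sigma(30n)$ and $\sigma(30n+1)$ cannot be within $(\log\log x)^2$ of each other when both $30n$ and $30n+1$ have $\gg\log\log x$ prime factors, because a single additional prime factor changes $\sigma$ multiplicatively, hence by an additive amount of order at least $\sigma$ divided by that prime, and a careful pigeonhole on the largest prime factors shows the two values cannot agree so closely unless they are literally equal. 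The set of $n\leq x$ with $\omega(30n+1)<(1-\alpha)\log\log x$ (or $\omega(30n)<(1-\alpha)\log\log x$) has size at most $O\bigl(x/(\log x)^{Q(-\alpha)}\bigr)=o(x)$ by Lemma \ref{lemD3} (noting $\omega(30n)=\omega(n)+O(1)$ and absorbing constants), and the set with $\sigma(30n+1)=\sigma(30n)$ is $o(x)$ by Lemma \ref{lemD1}. Hence $(B\triangle C)(x)=o(x)$, so $\mathbf{d}(B\triangle C)=0$.

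The main obstacle I anticipate is the middle step: converting the ``$\omega$ is large'' information into the claim that $\sigma(30n)$ and $\sigma(30n+1)$ cannot be within $(\log\log x)^2$ of each other unless they are equal. This requires a quantitative argument that two distinct values of $\sigma$ on consecutive integers with many prime factors must differ by a substantial amount — essentially that $\sigma$ of a number with $k$ distinct prime factors, all exceeding some bound, takes values in a set where consecutive gaps are not too small — and one has to handle the possibility that the small-difference is achieved with the smaller of the two values itself being small (which is where controlling $\omega$ from below, together with the trivial bound $\sigma(m)\geq m\prod_{p\mid m}(1+1/p)$, does the work). I would carry this out by isolating the largest prime power dividing each of $30n$ and $30n+1$ and arguing that its removal perturbs $\sigma$ multiplicatively in a way incompatible with $|\sigma(30n)-\sigma(30n+1)|$ being as small as $(\log\log x)^2$ while $30n\asymp 30n+1\asymp x$; the bookkeeping here, rather than any deep idea, is the delicate part.
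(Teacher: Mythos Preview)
Your overall plan matches the paper's: show $\mathbf{d}(B\setminus C)=0$ (note that $C\subseteq B$, so $B\triangle C=B\setminus C$; you do not need to consider $C\setminus B$), using Lemma~\ref{lemD2} to bound $\sigma(30n)/(30n)$, Lemma~\ref{lemD1} to handle $\sigma(30n+1)=\sigma(30n)$, and Lemma~\ref{lemD3} to discard $n$ with few prime factors. However, the step you flag as the ``main obstacle'' is a genuine gap, and your proposed workaround via the largest prime power is not the right mechanism.

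The paper closes the gap with one further, essential idea you are missing: in addition to $D_1,D_2,D_3$ (your Gronwall and $\omega$ cutoffs), it discards the sets
\[
D_4=\{n: p^2\mid 30n\text{ for some }p>y\},\qquad D_5=\{n: p^2\mid 30n+1\text{ for some }p>y\},
\]
with $y=\tfrac14\log\log x$; each has density $O(1/y)=o(1)$. For $n$ outside $D_2\cup D_3\cup D_4\cup D_5$ and $n\geq\sqrt{x}$, both $30n$ and $30n+1$ have at least $\tfrac12\log\log\sqrt{x}-\pi(y)$ distinct prime factors exceeding $y$, and each such prime $p$ occurs to the first power, contributing an \emph{even} factor $p+1$ to $\sigma$. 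Hence both $\sigma(30n)$ and $\sigma(30n+1)$ are divisible by $2^{\,\frac12\log\log\sqrt{x}-\pi(\frac14\log\log x)}$, and if they differ, their difference is at least this power of $2$, which grows like a power of $\log x$ and eventually exceeds the upper bound $2e^\gamma\log\log x$. Without the squarefreeness from $D_4,D_5$, there is no reason for $\sigma(30n)$ to have high $2$-adic valuation (e.g.\ $\sigma(p^2)=1+p+p^2$ is odd), and your perturbation-by-largest-prime-power heuristic does not force a lower bound on $|\sigma(30n)-\sigma(30n+1)|$ of the required size.
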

	\begin{proof}
	Since $$ \frac{\sigma_{-1}(30n+1)}{\sigma_{-1}(30n)}=\frac{\sigma(30n+1)}{\sigma(30n)}\cdot\frac{30n}{30n+1}, $$
	we have $ C\subseteq B. $
	It follows that $$ B\setminus C=\left\{n\geq 1: 0\leq \sigma(30n+1)-\sigma(30n)<\frac{\sigma(30n)}{30n}\right\}. $$  We will prove that the set $ B\setminus C $ has density $ 0. $ Let $$ D_{0}=\left\{n\geq 1: \sigma(30n+1)=\sigma(30n) \right\}, \quad  \quad D_{1}=\left\{n\geq 1: \frac{\sigma(30n)}{30n}>2e^{\gamma}\log\log n\right\}, $$
	$$ D_{2}=\left\{n\geq 1: \omega(30n+1)\leq \frac{1}{2}\log\log n\right\},\quad  \quad D_{3}=\left\{n\geq 1: \omega(30n)\leq\frac{1}{2}\log\log n\right\}, $$
	$$ D_{4}=\left\{n\geq 1: p^{2}\mid 30n\ \text{for\ some}\ p>y\right\},\quad  \quad D_{5}=\left\{n\geq 1: p^{2}\mid 30n+1\ \text{for\ some}\ p>y\right\}. $$
	where $ \gamma $ denotes the Euler constant and $ y $ is a given real number.
	Let $ y=\frac{1}{4}\log\log x. $ Then by the definitions of $ D_{1},\ldots,D_{5}, $ for any positive integer $ n $ satisfying $ n\in (B\setminus C)\setminus (D_{1}\cup D_{2}\cup D_{3}\cup D_{4}\cup D_{5}) $ and $ n\geq \sqrt{x}, $ either $ \sigma(30n+1)=\sigma(30n) $ or 
	$$ 2^{\frac{1}{2}\log\log \sqrt{x}-\pi\left(\frac{1}{4}\log\log x\right)}\leq \sigma(30n+1)-\sigma(30n)<\frac{\sigma(30n)}{30n}\leq 2e^{\gamma}\log\log x. $$
	Therefore, for all sufficiently large $ x,$
	$$ (B\setminus C)(x)\leq \sqrt{x}+\sum_{i=0}^{5}D_{i}(x). $$
	Dividing by $ x $ and taking $ x \rightarrow \infty, $ it follows from Lemmas \ref{lemD1}, \ref{lemD2} and \ref{lemD3} that $\mathbf{d}(B\setminus C)=0.$
	Therefore, $ B $ has a density and
	$\mathbf{d}B=\mathbf{d}C.$	
	
	This completes the proof of Lemma \ref{lem0}.
	\end{proof}

	Let $ y\geq 5. $ Then a number $ n $ is $ y $-smooth if its largest prime divisor $ p $ has $ p \leq y. $ Let $ Y (n) $ be the largest $ y $-smooth divisor of
	$ n. $ In the following discussions, we always assume that $ a$ and $b $ are $ y $-smooth. Let 
	$$
	S(a, b)=\{n\geq 1:Y(30n+1)=a,Y(30n)=b\}.
	$$
	It is clear that the sets $ S(a, b) $ partition $ \mathbb{N_+}. $ $ S(a, b) = \emptyset $ unless $ 30\mid b $ 
	and $ (a, b) = 1. $ We partition $ C $ via $ C(a, b) = C\cap S(a, b). $
	Let $ P=P(y)=\prod_{p\leq y}p,$ 
	$$
	S\left(a, b ; t_1, t_2\right)=\left\{n \in S(a, b):(30n+1) / a \equiv t_1 \hskip-3mm\pmod P,\quad  30n / b \equiv t_2 \hskip-3mm\pmod P\right\},
	$$
	where $ 1\leq t_1\leq P, $ $ 1\leq t_2\leq P $ and $ (t_1t_2,P)=1. $
	\begin{lemma}\label{lem1}
	We have
	$$
	\mathbf{d} S\left(a, b ; t_1, t_2\right)= \begin{cases} \frac{30}{a b P} & P \mid 1-a t_1+b t_2, \\ 0 & P \nmid 1-a t_1+b t_2,\end{cases}
	\quad\quad \mathbf{d} S(a, b)=\frac{30}{a b} \prod_{p \mid a b}\left(1-\frac{1}{p}\right) \prod_{\substack{p \mid P \\ p \nmid a b}}\left(1-\frac{2}{p}\right).
	$$
	Either $ S\left(a, b ; t_1, t_2\right)=\emptyset $ or
	$ n\in S\left(a, b ; t_1, t_2\right) $ has the form
	$$
		30n+1=a\left(t_1+x_0 P \ell\right)+a b P k,\quad \quad 
		30n =b\left(t_2+y_0 P \ell\right)+a b P k,
	$$ 
	where $ (x_0, y_0) $
	is a particular solution for $ ax-by=1, $ $ P \ell=1-a t_1+b t_2 $ and $ k\in \mathbb{Z}. $
	\end{lemma}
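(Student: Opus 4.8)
The plan is to reduce the whole statement to the single Diophantine equation $am_1-bm_2=1$ subject to congruence conditions modulo the squarefree modulus $P$, and then analyze that equation prime by prime. Throughout I assume $30\mid b$ and $(a,b)=1$ (the only case in which $S(a,b)\ne\emptyset$, by the remark preceding the lemma; otherwise everything is trivial). First I would unwind the definitions: writing $m_1=(30n+1)/a$ and $m_2=30n/b$, membership $n\in S(a,b)$ is equivalent to $a\mid 30n+1$, $b\mid 30n$ and $\gcd(m_1,P)=\gcd(m_2,P)=1$, the last two conditions being exactly the requirement $Y(30n+1)=a$, $Y(30n)=b$ (since $Y$ is multiplicative on coprime arguments, $a,b$ are $y$-smooth and $m_1,m_2$ then have no prime factor $\le y$, so $Y(am_1)=Y(a)Y(m_1)=a$ and likewise for $b$). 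Adjoining $m_1\equiv t_1$, $m_2\equiv t_2\pmod P$ describes $S(a,b;t_1,t_2)$; since $\gcd(t_1t_2,P)=1$, each $n\in S(a,b)$ lies in exactly one such set, so $\mathbf dS(a,b)=\sum_{t_1,t_2}\mathbf dS(a,b;t_1,t_2)$. Subtracting the two equations forces $am_1-bm_2=1$; fixing a particular solution $(x_0,y_0)$ of $ax-by=1$, every solution is $m_1=x_0+bk'$, $m_2=y_0+ak'$ with $k'\in\mathbb Z$.

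Second, I would decide when the congruences $m_1\equiv t_1$, $m_2\equiv t_2\pmod P$ are solvable in $k'$. As $P$ is squarefree I work modulo each prime $p\le y$, splitting into the three cases $p\mid a$, $p\mid b$, $p\nmid ab$ (disjoint and covering every prime dividing $ab$ because $(a,b)=1$). In each case, using $ax_0-by_0=1$, one checks that the pair of congruences $x_0+bk'\equiv t_1$, $y_0+ak'\equiv t_2\pmod p$ is consistent if and only if $at_1-bt_2\equiv1\pmod p$, and that when consistent it determines $k'$ uniquely mod $p$. Hence $S(a,b;t_1,t_2)=\emptyset$ unless $at_1-bt_2\equiv1\pmod p$ for every $p\le y$, i.e.\ unless $P\mid 1-at_1+bt_2$; and when this holds, the Chinese remainder theorem gives a unique $k'\equiv k_0'\pmod P$. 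Finally I would check that the pair $\bigl(t_1+x_0P\ell,\ t_2+y_0P\ell\bigr)$ with $P\ell=1-at_1+bt_2$ is a solution of $ax-by=1$ satisfying both congruences (a one-line computation from $ax_0-by_0=1$), hence equals $\bigl(x_0+bk_0',\ y_0+ak_0'\bigr)$; substituting $k'=k_0'+Pk$ into $m_1=x_0+bk'$, $m_2=y_0+ak'$ then produces exactly the asserted forms $30n+1=a(t_1+x_0P\ell)+abPk$, $30n=b(t_2+y_0P\ell)+abPk$, $k\in\mathbb Z$.

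Third, the density. When $P\mid 1-at_1+bt_2$, since $30\mid b$ each $k\in\mathbb Z$ gives an integer $n=b(t_2+y_0P\ell)/30+(abP/30)k$, so $n$ runs through an arithmetic progression of common difference $abP/30$ (only finitely many terms have $n<1$); thus $\mathbf dS(a,b;t_1,t_2)=30/(abP)$, and it is $0$ otherwise. Summing over $(t_1,t_2)$ gives $\mathbf dS(a,b)=\tfrac{30}{abP}\,N$ with $N=\#\{(t_1,t_2)\bmod P:\gcd(t_1,P)=\gcd(t_2,P)=1,\ at_1-bt_2\equiv1\pmod P\}$. By the Chinese remainder theorem $N=\prod_{p\le y}N_p$, and the same three-case analysis gives $N_p=p-1$ when $p\mid ab$ and $N_p=p-2$ when $p\nmid ab$; dividing by $P=\prod_{p\le y}p$ turns this into $\prod_{p\mid ab}(1-\tfrac1p)\prod_{p\mid P,\,p\nmid ab}(1-\tfrac2p)$, which is the claimed formula.

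The main obstacle is precisely the prime-by-prime compatibility analysis in the second step: because $a$ and $b$ are $y$-smooth, neither is invertible modulo $P$, so one cannot simply invert and solve for $k'$; one has to track carefully, for each $p\mid ab$, which congruence degenerates and verify that the residual constraint is exactly $at_1-bt_2\equiv1\pmod p$. This single computation underpins both the parametrization and the count of $N$, and it is also where the hypotheses $30\mid b$, $(a,b)=1$ (so that $2,3,5\mid b$ and $2,3,5\nmid a$) quietly guarantee $N_p=p-1>0$ at the small primes, keeping $N$ nonzero.
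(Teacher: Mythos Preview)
Your proof is correct and follows essentially the same route as the paper: both reduce membership in $S(a,b;t_1,t_2)$ to the linear Diophantine equation $am_1-bm_2=1$ with congruence conditions mod $P$, parametrize its solutions, and then count the admissible $(t_1,t_2)$ prime by prime via the Chinese remainder theorem to get $\prod_{p\mid ab}(p-1)\prod_{p\mid P,\,p\nmid ab}(p-2)$. The only cosmetic difference is the order of operations: the paper first builds in the congruences by writing $m_1=t_1+xP$, $m_2=t_2+yP$ and then solves $ax-by=\ell$, whereas you first take the general solution $m_1=x_0+bk'$, $m_2=y_0+ak'$ and then impose the congruences on $k'$; both lead to the same parametrization and the same density.
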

	\begin{proof}
		We follow the method of Kobayashi and Trudgian\cite[p.141]{Kobayashi2020} to show that $ S(a, b) $ is a finite and disjoint union of arithmetic progressions. By the definition of $ S\left(a, b ; t_1, t_2\right), $ $$ 30n + 1 = a(t_1+xP), \quad\quad  30n = b(t_2+yP), $$ where $ x,y\in \mathbb{Z_{+}}. $ Hence,
		$$ aPx-bPy=1-at_1+bt_2. $$
		If $ P\nmid 1-at_1+bt_2, $ then $ S\left(a, b ; t_1, t_2\right)=\emptyset. $ If $ P\mid 1-at_1+bt_2, $ write $ P\ell=1-at_1+bt_2, $ then $ ax-by=\ell. $ It follows that $$ x=x_0\ell+kb,\quad \quad  y=y_0\ell+ka, $$
		where $ (x_0, y_0) $
		is a particular solution for $ ax-by=1 $  and $ k\in \mathbb{Z}. $ Therefore,
		$$
		30n+1=a\left(t_1+x_0 P \ell\right)+a b P k,\quad \quad 
		30n =b\left(t_2+y_0 P \ell\right)+a b P k.
		$$
		It follows that
		$$
		\mathbf{d} S\left(a, b ; t_1, t_2\right)= \begin{cases} \frac{30}{a b P} & P \mid 1-a t_1+b t_2, \\ 0 & P \nmid 1-a t_1+b t_2.\end{cases}
		$$
		In order to give $ \mathbf{d} S(a, b), $ we will count the number of pairs $ (t_1,t_2) $ satisfying
		$ P \mid 1-a t_1+b t_2. $ For each prime $ p\mid P, $ if $ p\mid a, $ then
		$ P \mid 1-a t_1+b t_2 $ if and only if $ t_2\equiv -b^{-1} \pmod p, $ so $ t_1 $ is free and $ t_2 $ is completely
		determined modulo $ p. $ Thus, there are $ p-1 $ ordered pairs modulo $ p. $ Likewise, if $ p\mid b, $
		$ P \mid 1-a t_1+b t_2 $ if and only if $ t_1\equiv a^{-1} \pmod p, $ so there are $ p-1 $ ordered pairs
		modulo $ p. $ Finally, if $ p\nmid ab, $ $ P \mid 1-a t_1+b t_2 $ if and only if $ t_2\equiv b^{-1}(at_1-1) \pmod p. $ For
		each $ t_1, $ we fail to get a valid $ t_2 $ only if $ t_1\equiv a^{-1} \pmod p. $ Thus, there are
		$ p-2 $ valid ordered pairs modulo $ p. $ By the Chinese remainder theorem, the number of pairs $ (t_1,t_2) $ satisfying
		$ P \mid 1-a t_1+b t_2 $ is
		$$ \prod_{p \mid a b}\left(p-1\right) \prod_{\substack{p \mid P \\ p \nmid a b}}\left(p-2\right). $$
		Therefore,
		$$
		\mathbf{d} S(a, b)=\frac{30}{a b} \prod_{p \mid a b}\left(1-\frac{1}{p}\right) \prod_{\substack{p \mid P \\ p \nmid a b}}\left(1-\frac{2}{p}\right).
		$$
		
		This completes the proof of Lemma \ref{lem1}.
	\end{proof}
	\begin{lemma}(\cite[Lemma 1]{Kobayashi2020}\ )\label{lem2}
	Let	$ g(n)=(\sigma_{-1}(n))^r $ and let $ h(n) $ be the M\"obius transform of $ g(n). $ Namely, $ h(p^\alpha)=g(p^\alpha)-g(p^{\alpha-1}). $ If $ k $ and $ l $ are given coprime positive integers, $ r\geq 1 $ and $ x\geq 2, $ then
	$$
	\sum_{\substack{n \leq x \\ n \equiv l \hskip-3mm\pmod k}}g(n)= \frac{\Lambda_k(r)}{k}x+O\left(k^{1 / 2}\log k(\log x)^{r}\right),
	$$
	where $$
	\Lambda_k(r)=\prod_{p \nmid k}\left(1+\frac{h(p)}{p}+\frac{h\left(p^2\right)}{p^2}+\cdots\right).
	$$
	\end{lemma}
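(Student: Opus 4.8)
The plan is to write $g=\sigma_{-1}^{\,r}$ as a Dirichlet convolution $g=h*1$, evaluate the sum over the residue class by interchanging the order of summation, extract the main term by completing an Euler product, and bound what remains by two elementary estimates for sums involving $h$. The first step is to make the arithmetic of $h$ explicit. Since $\sigma_{-1}(n)=\sum_{d\mid n}1/d$, expanding the $r$-th power gives, for every $n$,
$$g(n)=\Bigl(\sum_{d\mid n}\tfrac1d\Bigr)^{r}=\sum_{d_{1}\mid n}\cdots\sum_{d_{r}\mid n}\frac{1}{d_{1}\cdots d_{r}}=\sum_{D\mid n}h(D),\qquad h(D)=\sum_{[d_{1},\dots,d_{r}]=D}\frac{1}{d_{1}\cdots d_{r}},$$
where $[\,\cdot\,]$ denotes least common multiple; since this holds for all $n$, the $h$ defined combinatorially here is exactly the Möbius transform of $g$ in the statement, it is multiplicative, and it is nonnegative. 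A one-line mean-value-theorem computation shows $h(p^{\alpha})\ll_{r}p^{-\alpha}$ for every $\alpha\ge1$ (with $h(p)=(1+1/p)^{r}-1\sim r/p$), so each local factor of $\Lambda_{k}(r)$ equals $1+O_{r}(p^{-2})$, the product converges, and $\Lambda_{k}(r)=\sum_{\gcd(D,k)=1}h(D)/D$.

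Next I would write, with $N(D)=\#\{\,n\le x:\ n\equiv l\ (\mathrm{mod}\ k),\ D\mid n\,\}$,
$$\sum_{\substack{n\le x\\ n\equiv l\ (\mathrm{mod}\ k)}}g(n)=\sum_{D\ge1}h(D)\,N(D).$$
Because $\gcd(k,l)=1$, any $n\equiv l\ (\mathrm{mod}\ k)$ has $\gcd(n,k)=1$, so $N(D)=0$ unless $\gcd(D,k)=1$; for such $D$ the two congruences $n\equiv l\ (\mathrm{mod}\ k)$ and $n\equiv0\ (\mathrm{mod}\ D)$ determine a single residue class modulo $Dk$ by the Chinese remainder theorem, whence $N(D)=x/(Dk)+O(1)$ for $D\le x$ and $N(D)=0$ for $D>x$. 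Therefore
$$\sum_{\substack{n\le x\\ n\equiv l\ (\mathrm{mod}\ k)}}g(n)=\frac{x}{k}\sum_{\substack{D\le x\\ \gcd(D,k)=1}}\frac{h(D)}{D}+O\Bigl(\sum_{\substack{D\le x\\ \gcd(D,k)=1}}h(D)\Bigr).$$

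It then remains to bound two sums. For the $O$-term the key point is to bound it \emph{before} collapsing the $d_{i}$'s: since $[d_{1},\dots,d_{r}]=D\le x$ forces each $d_{i}\le x$, one gets $\sum_{D\le x}h(D)\le\bigl(\sum_{d\le x}1/d\bigr)^{r}\ll_{r}(\log x)^{r}$, and the same estimate holds with $x$ replaced by any $t\ge2$. Abel summation against this then gives $\sum_{D>x}h(D)/D\ll_{r}(\log x)^{r}/x$. Feeding this into the main term, $\sum_{D\le x,\ \gcd(D,k)=1}h(D)/D=\Lambda_{k}(r)-\sum_{D>x,\ \gcd(D,k)=1}h(D)/D=\Lambda_{k}(r)+O_{r}((\log x)^{r}/x)$, and hence
$$\sum_{\substack{n\le x\\ n\equiv l\ (\mathrm{mod}\ k)}}g(n)=\frac{\Lambda_{k}(r)}{k}\,x+O_{r}\bigl((\log x)^{r}\bigr),$$
which is at least as strong as the asserted bound $O(k^{1/2}\log k\,(\log x)^{r})$ (the bounded range of $x$ being trivial).

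The convolution identity, the Chinese-remainder count, and the completion of the Euler product are all routine; I expect the only point needing genuine care to be the estimate $\sum_{D\le t}h(D)\ll_{r}(\log t)^{r}$ with the exactly correct power of $\log t$ and with uniformity in $t$. This is precisely what rules out a spurious larger power of $\log x$ (or of $\log\log x$) in the final error term, and it is the step that actually uses the special structure of $h$, namely $h(p)\sim r/p$ rather than merely $h(p)\ll_r 1$.
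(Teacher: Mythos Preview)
The paper does not give its own proof of this lemma; it is quoted verbatim from \cite[Lemma~1]{Kobayashi2020} and used as a black box in the proof of Theorems~\ref{thm1} and~\ref{thm2}. So there is no in-paper argument to compare against.

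Your argument is correct. The combinatorial identification $h(D)=\sum_{[d_{1},\dots,d_{r}]=D}(d_{1}\cdots d_{r})^{-1}$ is exactly the M\"obius transform of $g$, and the two estimates you isolate, $\sum_{D\le t}h(D)\le\bigl(\sum_{d\le t}1/d\bigr)^{r}\ll_{r}(\log t)^{r}$ and its Abel-summed consequence $\sum_{D>x}h(D)/D\ll_{r}(\log x)^{r}/x$, are precisely what is needed. In fact your error term $O_{r}((\log x)^{r})$ is uniform in $k$ and hence sharper than the stated $O(k^{1/2}\log k\,(\log x)^{r})$ for $k\ge2$; the extra $k^{1/2}\log k$ in the quoted form presumably comes from a less careful treatment in the original source, and is harmless for the application here since only the asymptotic $\sum_{n\in S(a,b),\,n\le x}g(30n+1)\sim g(a)\Lambda_{P}(r)\,\mathbf d S(a,b)\,x$ is used.
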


	\section{Proofs of Theorems \ref{thm1} and \ref{thm2}}

\begin{proof} 

By Lemmas \ref{lem1} and \ref{lem2},
$$
\begin{aligned}
	\sum_{\substack{n \leq x \\
			n \in S\left(a, b ; t_1, t_2\right)}} g(30n+1) & =g(a) \sum_{\substack{m \leq(30x+1) / a \\
			m \equiv t_1+x_0 P \ell \hskip-3mm\pmod {b P}}} g(m) \\
	& =g(a) \Lambda_{b P}(r) \frac{30}{a b P} x+O\left((\log x)^{r}\right)\\
	& =g(a) \Lambda_{P}(r) \frac{30}{a b P} x+O\left((\log x)^{r}\right) .
\end{aligned}
$$
Summing over all pairs $ (t_1,t_2), $ 
$$
\sum_{\substack{n \in S(a, b) \\ n \leq x}} g(30n+1)\sim g(a)\Lambda_{P}(r)\mathbf{d}S(a, b)x, \quad \quad  x \rightarrow \infty. $$
By the definition of $ C, $
$$
\sum_{\substack{n \in S(a, b) \\ n \leq x}} g(30n+1) \geq \sum_{\substack{n \in S(a, b) \\ n \leq x,\ n\in C}} g(30n)+\sum_{\substack{n \in S(a, b) \\ n \leq x,\ n\not\in C}} g(30n+1) .
$$
It is clear that $ g(n)\geq 1, $ $ g(30n)=g(b)g(30n/b) $ and $ g(30n+1)=g(a)g((30n+1)/a). $ By the definitions of $ S(a, b) $ and $ C(a, b), $
$$
\sum_{\substack{n \in S(a, b) \\ n \leq x}} g(30n+1) \geq g(b)|C(a, b) \cap[1, x]|+g(a)(|S(a, b) \cap[1, x]|-|C(a, b) \cap[1, x]|) .
$$
Dividing by $ x $ and taking $ x \rightarrow \infty, $ 
$$ g(a)\Lambda_{P}(r)\mathbf{d}S(a, b)\geq g(b)\mathbf{d}C(a, b)+g(a)\mathbf{d}S(a, b)-g(a)\mathbf{d}C(a, b). $$
Since $ \Lambda_{P}(r)>1 $ and $ r\geq 1, $ for $ (\Lambda_{P}(r))^{1/r}<\sigma_{-1}(b)/ \sigma_{-1}(a),$ we have
$$ \mathbf{d}C(a, b)\leq \frac{(\Lambda_{P}(r)-1)g(a)\mathbf{d}S(a, b)}{g(b)-g(a)}. $$
We take $ r=1. $ . Then by the Euler product formula,
$$
\Lambda_P(1)=\zeta(2) \prod_{p \mid P}\left(1-\frac{1}{p^2}\right).
$$
In calculation, we fix the parameters $ y$ and $ z, $ then run through $ y $-smooth numbers $ a $ and $ b $ satisfying $ ab\leq z, $ $ 30\mid b $ 
and $ (a, b) = 1. $ After summing over every difference between $ \mathbf{d}S(a, b) $ and the above nontrivial bound of $ \mathbf{d}C(a, b) $ for each pair $ (a, b), $  $ 1 $ minus the sum is an upper bound of $ \mathbf{d}C. $  The choice $ y = 157, z = 10^{9} $ yields the value $ 0.0371813. $ 

Theorem \ref{thm2} follows Lemma \ref{lem0} and Theorem \ref{thm1} then follows immediately.

\end{proof}
	
	\bigskip
	
	\section*{Acknowledgments}
	The author would like to thank Yong-Gao Chen and Yuchen Ding for their helpful suggestions. 
	
	This work is supported by the Natural Science Foundation of the Jiangsu Higher Education Institutions of China (Grant No. 24KJD110001).

\end{document}